\documentclass[a4paper,12pt]{article}
\setlength{\textwidth}{16cm}
\setlength{\textheight}{23cm}
\setlength{\oddsidemargin}{0mm}
\setlength{\topmargin}{-1cm}

\usepackage{latexsym}
\usepackage{amsmath}
\usepackage{amssymb}
\usepackage{enumerate}

\usepackage{theorem}
\newtheorem{theorem}{Theorem}[section]

\newtheorem{lemma}[theorem]{Lemma}
\newtheorem{corollary}[theorem]{Corollary}

\newtheorem{MT}{Main Theorem}
\newtheorem{CMT}{Corollary to Main Theorem}
\newtheorem{TCT}{A New Type of Toponogov  Comparison Theorem}

\theorembodyfont{\rmfamily}
\newtheorem{proof}{\textmd{\textit{Proof.}}}

\newtheorem{remark}[theorem]{Remark}

\newtheorem{acknowledgement}{\textmd{\textit{Acknowledgements.}}}

\makeatletter

\@addtoreset{equation}{section}
\makeatother

\newcommand{\qedd}{\hfill \Box}

\newcommand{\lra}{\longrightarrow}
\newcommand{\wt}{\widetilde}

\newcommand{\ol}{\overline}

\newcommand{\B}{\ensuremath{\mathbb{B}}}

\newcommand{\R}{\ensuremath{\mathbb{R}}}
\newcommand{\Sph}{\ensuremath{\mathbb{S}}}

\def\vol{\mathop{\mathrm{vol}}\nolimits}

\def\Ric{\mathop{\mathrm{Ric}}\nolimits}

\title
{
Sufficient Conditions for Open Manifolds 
to be Diffeomorphic to Euclidean Spaces\footnote{
2010 Mathematics Subject Classification. Primary 53C21; Secondary 53C22.}
\footnote{
Keywords\,:\,volume growth, radial curvature, Ricci curvature
}
}
\author{Kei KONDO $\cdot$ Minoru TANAKA}
\date{}
\pagestyle{plain}

\begin{document}
\maketitle
\begin{abstract} 
Let $M$ be a complete 
non-compact connected Riemannian $n$-dimensional manifold. 
We first prove that, for any fixed point $p \in M$, the radial Ricci curvature of $M$ at $p$ is 
bounded from below by the radial curvature function of some non-compact $n$-dimensional model. 
Moreover, we then prove, without the pointed Gromov--Hausdorff convergence theory, 
that, if {\bf model} volume growth is sufficiently close to $1$, 
then $M$ is diffeomorphic to Euclidean $n$-dimensional space. Hence, 
our main theorem has various advantages of the Cheeger--Colding diffeomorphism 
theorem via the {\bf Euclidean} volume growth. 
Our main theorem also contains a result of do Carmo and Changyu as a special case. 
\end{abstract}

\section{Introduction}\label{sec:intro}
In the geodesic theory of global Riemannian geometry, 
the critical point theory of distance functions, introduced by Grove and Shiohama \cite{GS}, 
provides a useful application to study the relationship between the topology and geometry of 
a given Riemannian manifold. Here we say that 
a point $q$ in a complete Riemannian manifold $M$ is a {\em critical point of  
the distance function $d(p, \, \cdot \, )$ to $p \in M$} (or a {\it critical point $q$ for $p$}), 
if for every nonzero tangent vector $v$ in the tangent space $T_{q} M$ to $q$, 
there exists a minimal geodesic segment $\gamma$ emanating from $q$ to $p$ satisfying 
$\angle(v, \gamma'(0)) \le \pi / 2$, where $\angle(v, \gamma'(0))$ denotes 
the angle between two vectors $v$ and $\gamma'(0)$ in $T_{q} M$.\par  
For complete non-compact Riemannian manifolds with bounded sectional curvature, 
this critical point theory becomes particularly useful when used in conjunction with 
Toponogov's comparison theorem. It is possible to investigate whether $M$ has critical points or not 
by using the technique of drawing a circle or a geodesic polygon, joining two points 
by a minimal geodesic segment, and finally estimating the angles of geodesic triangles on $M$. 
If $M$ admits a region which has no critical points, then the shape of the region 
can be stretched and deformed into a region on a plane 
(cf.\,\cite{GS}, Corollary 1.4 in \cite[Chapter 11]{P}). In particular 
$M$ is diffeomorphic to Euclidean $n$-dimensional space $\R^{n}$, if 
$M$ does not have any critical points of $d(p, \, \cdot \, )$ for a fixed point $p \in M$.

\bigskip

To control a set of critical points of the distance function on a non-compact Riemannian $n$-dimensional manifold $M$ with {\bf non-negative} Ricci curvature {\bf everywhere}, 
Otsu \cite{O} very first introduced the Euclidean volume growth 
\begin{equation}\label{EVG}
\lim_{t \to \infty} \frac{\vol B_{t} (x)}{t^{n} \vol \Sph^{n -1}(1)},  
\end{equation} 
where $\vol B_{t}(x)$ denotes the volume of the open distance ball $B_{t}(x)$ 
at a point $x \in M$ with radius $t > 0$ in $M$, and $\vol \Sph^{n -1}(1)$ denotes the volume of 
the unit ball $\Sph^{n -1}(1)$ in Euclidean $n$-dimensional space $\R^{n}$. 
Notice that, by the Bishop volume comparison theorem, 
\[
\lim_{t \to \infty} \frac{\vol B_{t} (x)}{t^{n} \vol \Sph^{n -1}(1)} \le 1.
\] 
If (\ref{EVG}) equals $1$, the $M$ is isometric to $\R^{n}$. 
Hence, it is very natural to expect $M$ to be diffeomorphic to $\R^{n}$, 
when (\ref{EVG}) is sufficiently close to $1$. In fact, Otsu proved 

\begin{theorem}{\rm (\cite[Theorem 1.2]{O})}\label{thm1.1-2009-11-05}
Let $M$ be a complete non-compact Riemannian $n$-manifold 
with non-negative Ricci curvature, 
and let $\lambda : [0, \infty) \lra \R$ be a negative increasing continuous function 
such that 
\begin{enumerate}[{\rm ({O--}1)}]
\item
$\displaystyle{
c_{0} := \int_{0}^{\infty} t \lambda (t) \,dt > -\infty
}$ and that 
\item
the sectional curvature at any point $q \in M$ is bounded from below by $\lambda (d (p, q))$ 
for some fixed point $p \in M$
\end{enumerate}
Then, there exists $\delta (n, c_{0}) > 0$ such that, if 
\[
\lim_{t \to \infty} \frac{\vol B_{t} (x)}{t^{n} \vol \Sph^{n -1}(1)} 
\ge 1 - \delta (n, c_{0})
\]
for some $x \in M$, 
then $M$ is diffeomorphic to Euclidean $n$-space $\R^{n}$. 
\end{theorem}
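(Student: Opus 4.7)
The plan is to reduce the statement to the critical-point criterion of Grove--Shiohama \cite{GS}: $M$ is diffeomorphic to $\R^n$ as soon as the distance function $d(p, \cdot)$ has no critical points other than $p$. Fixing $p$ as the reference point of hypothesis (O--2), it suffices to show that, for $\delta(n, c_0)$ chosen small enough, no critical point $q \ne p$ can exist. I would argue by contradiction: assuming such a $q$, I aim to derive a quantitative incompatibility between the radial Toponogov geometry coming from (O--1)--(O--2) and the volume rigidity coming from the hypothesis.

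First I would set up the radial comparison. The integrability condition $c_0 > -\infty$ with $\lambda < 0$ increasing forces the model two-dimensional surface of revolution $\wt M_\lambda$ with radial sectional curvature $\lambda$ to be asymptotically Euclidean: the Jacobi factor $f_\lambda$ (solution of $f'' + \lambda f = 0$, $f(0) = 0$, $f'(0) = 1$) satisfies $f_\lambda(t)/t \to a(\lambda) \in (0, 1]$ with $1 - a(\lambda)$ quantitatively controlled by $c_0$. Hypothesis (O--2) then yields a Toponogov hinge comparison against $\wt M_\lambda$ for all geodesic triangles having $p$ as a vertex, with the comparison converging to Euclidean comparison as $c_0 \to 0$.

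Next, at the critical point $q$ I would pick a ray $\gamma \colon [0,\infty) \lra M$ (available by non-compactness) and, using criticality applied to $v = \gamma'(0)$, a minimizing segment from $q$ to $p$ of angle $\le \pi/2$ with $\gamma'(0)$. Applying the hinge comparison from step one to the hinge at $q$ with sides going to $p$ and to $\gamma(t)$ then gives, for all large $t$,
\[
d(p, \gamma(t)) \le \sqrt{d(p,q)^2 + t^2} + R(c_0, d(p,q), t),
\]
with $R \to 0$ as $c_0 \to 0$. Equivalently, the excess $e(q) := d(p,q) + t - d(p, \gamma(t))$ is bounded \emph{below} by a quantity which, for $t \gg d(p,q)$ and small $c_0$, is close to $d(p,q)$.

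The volume hypothesis now enters as an opposite \emph{upper} bound on $e(q)$. Under non-negative Ricci curvature, Bishop--Gromov relative volume comparison together with $\vol B_t(x)/(t^n \vol \Sph^{n -1}(1)) \ge 1 - \delta$ is near-rigid: geodesic triangles in $M$ are forced to be nearly Euclidean, and refined Abresch--Gromoll-type excess inequalities, sharpened by the near-maximal volume ratio, bound the excess at any point by a quantity tending to zero with $\delta$. Choosing $\delta(n, c_0)$ so that this upper bound strictly falls below the lower bound from the previous step produces the desired contradiction for critical points $q$ sufficiently far from $p$; critical points in a bounded neighborhood of $p$ are excluded by a standard continuity and compactness argument once $p$ is fixed. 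The main obstacle, and the technical heart of the argument, is producing the matched two-sided excess estimate: the Toponogov remainder $R$ must be sharply controlled by $c_0$, and the volume-induced upper bound on $e(q)$ must be made explicit in $\delta$ and $n$, so that a single choice of $\delta(n, c_0)$ renders the two bounds incompatible uniformly in the distance from $p$.
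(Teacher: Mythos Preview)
This theorem is quoted from Otsu and is not re-proved in the paper; the paper instead establishes a strict generalization (its Main Theorem, Theorem~\ref{prop2009-09-29}), and it is against that proof that your proposal should be measured. The paper's argument is organized quite differently from yours: it works entirely at the base point $p$, never at the critical point. The volume hypothesis is converted, via Bishop comparison, into a lower bound on the $\Sph^{n-1}_p$-measure of the set $A_p$ of initial directions of rays from $p$ (Lemmas~\ref{lem4.1}--\ref{lem2009-08-29-2}). Assuming a critical point $x$, one applies the radial Toponogov theorem to triangles $\triangle(p,x,\mu(t_i))$ with $\mu$ any ray from $p$ and $t_i\to\infty$, and then the Gauss--Bonnet theorem on the two-dimensional comparison surface, to obtain a \emph{uniform} lower bound $\angle(\gamma'(0),\mu'(0))\ge\delta(K^*)$ for every minimal segment $\gamma$ from $p$ to $x$ and every ray $\mu$. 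Hence $A_p\subset\ol{\B_{\pi-\delta(K^*)}(-\gamma'(0))}$ for each such $\gamma$; since criticality of $x$ supplies at least two distinct $\gamma$, the set $A_p$ lies in the intersection of two distinct closed balls of radius $\pi-\delta(K^*)$ in $\Sph^{n-1}_p$ and therefore has measure strictly smaller than one such ball, contradicting the ray-mass lower bound. No excess function and no Abresch--Gromoll input appear.

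Your route, by contrast, has three soft spots that the paper's scheme sidesteps. First, the hinge comparison you invoke is based at $q$, whereas (O--2) bounds sectional curvature by $\lambda(d(p,\cdot))$, a function of distance from $p$; the radial Toponogov theorem in the paper is calibrated precisely for triangles with $p$ as a vertex and does not directly yield a hinge inequality at an arbitrary vertex, while falling back to the constant bound $\lambda(0)$ would make your remainder $R$ depend on $\lambda(0)$ rather than on $c_0$. Second, your excess lower bound is of order $d(p,q)$ and degenerates as $q\to p$, so the ``standard continuity and compactness argument'' you defer to is carrying real weight that you have not supplied; the paper's angle threshold $\delta(K^*)$ is independent of $d(p,x)$, so no such case distinction is needed. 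Third, and most seriously, ``Abresch--Gromoll sharpened by near-maximal volume'' is not an off-the-shelf statement: the classical excess inequality has no dependence on the volume defect $\delta$, and manufacturing one is essentially Colding-type almost-rigidity, which is far heavier than the elementary spherical-measure contradiction the paper uses.
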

 
\medskip\noindent
Notice that (O--1) and (O--2) imply that the manifold $M$ is at least as curved as 
a model surface of revolution with a finite total curvature. 

\bigskip

There is a {\bf great} number of related results for Theorem \ref{thm1.1-2009-11-05}. 
However, after Colding's study of the relationship between Ricci curvatures on complete Riemannian manifolds, Gromov--Hausdorff convergence theory and volumes of the manifolds (\cite{C}), 
Cheeger and Colding proved the next theorem, 
which shines out very much among such related results: 

\begin{theorem}{\rm (\cite[Theorem A.1.11]{CC})}\label{thm1.2-2009-11-05}
Let $M$ be a complete non-compact Riemannian $n$-manifold with non-negative Ricci curvature. 
Then, there exists $\delta (n) > 0$ such that, if 
\[
\vol B_{t} (x) \ge (1 - \delta (n)) \vol \Sph^{n -1}(1) t^{n}
\]
for all $x \in M$, $t > 0$, then $M$ is diffeomorphic to Euclidean $n$-space $\R^{n}$. 
\end{theorem}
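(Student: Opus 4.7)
The plan is to argue by contradiction, exploiting the Bishop--Gromov volume comparison theorem together with the Cheeger--Colding convergence machinery. Suppose the conclusion fails; then for each $i \in \N$ there is a complete non-compact Riemannian $n$-manifold $(M_{i}, g_{i})$ with non-negative Ricci curvature and basepoint $p_{i} \in M_{i}$, satisfying
\[
\vol B_{t}(x) \ge \bigl(1 - \tfrac{1}{i}\bigr) \vol \Sph^{n-1}(1) \, t^{n}
\]
for every $x \in M_{i}$ and every $t > 0$, yet no $M_{i}$ is diffeomorphic to $\R^{n}$. Since the Bishop--Gromov quotient is monotone non-increasing in $t$, the hypothesis pins the volume ratio within $1/i$ of $1$ at \emph{every} scale and at \emph{every} basepoint, so each $(M_{i}, p_{i})$ is volume-wise almost Euclidean on all balls.

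Next I would invoke Gromov's pointed precompactness theorem, which under the uniform lower Ricci bound yields a subsequence $(M_{i}, p_{i}) \to (X, p_{\infty})$ converging in the pointed Gromov--Hausdorff sense to a length space $X$. Colding's volume continuity, together with the almost-volume-cone implies almost-metric-cone theorem of Cheeger--Colding, forces $X$ to be a metric cone at every one of its points and to have maximal asymptotic volume growth, hence to be isometric to $\R^{n}$.

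The heart of the argument is then to upgrade this metric closeness to a diffeomorphism for $i$ large. Here one uses Cheeger--Colding's harmonic almost-splitting functions: on each ball $B_{R}(p_{i}) \subset M_{i}$ one constructs, by solving Dirichlet problems with boundary data approximating the Euclidean coordinate functions on a Gromov--Hausdorff approximation to $\R^{n}$, an $n$-tuple of harmonic functions whose gradients are almost orthonormal in an integral sense. Assembled into a map $\Phi_{i} : M_{i} \to \R^{n}$, these functions give a Gromov--Hausdorff approximation that, via the Cheng--Yau gradient estimate, the segment inequality, and the Cheeger--Colding $\varepsilon$-regularity theorems, becomes a smooth map of small distortion on definite-size balls. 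A scale-invariant patching procedure, combined with the fact that the injectivity can be checked locally once the Jacobian is close to the identity on overlaps, promotes the local diffeomorphisms to a global diffeomorphism $M_{i} \to \R^{n}$, contradicting the choice of $M_{i}$.

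The main obstacle is precisely this final step: Gromov--Hausdorff closeness to $\R^{n}$ is a purely metric condition and does not even yield a homeomorphism, let alone a diffeomorphism. Establishing that the harmonic $\varepsilon$-splitting map is a diffeomorphism requires the full strength of the Cheeger--Colding regularity theory, namely sharp integral Hessian bounds for the approximating harmonic functions, an $L^{2}$ to pointwise upgrade of the gradient control, and a genuinely global gluing argument using the non-collapsing assumption that holds uniformly in the basepoint. This is also why the hypothesis in Theorem \ref{thm1.2-2009-11-05} must be imposed at every $x \in M$ and every $t > 0$, rather than at a single basepoint as in Theorem \ref{thm1.1-2009-11-05}.
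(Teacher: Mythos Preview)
The paper does not prove Theorem~\ref{thm1.2-2009-11-05} at all: it is quoted from \cite[Theorem~A.1.11]{CC} as background and motivation, and no argument for it appears anywhere in the text. So there is no ``paper's own proof'' to compare against. Your sketch is a reasonable outline of the original Cheeger--Colding strategy (precompactness, volume continuity, almost-splitting via harmonic coordinates, and the $\varepsilon$-regularity/diffeomorphism step), and you correctly identify the hard part; but it remains a roadmap rather than a proof, since the substance lies precisely in the regularity theory you defer to.

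It is worth noting, however, that the entire point of the present paper is to obtain a result in the spirit of Theorem~\ref{thm1.2-2009-11-05} \emph{without} the pointed Gromov--Hausdorff machinery you invoke. The paper's Main Theorem instead uses (i) the observation that any $(M,p)$ admits a model lower bound on radial Ricci and sectional curvature (Theorem~\ref{prop3.2-2009-11-04}), (ii) a Toponogov-type comparison theorem referenced to a model surface of revolution, (iii) the Gauss--Bonnet theorem on that surface to force a definite angle gap between any minimal segment to a putative critical point and every ray from $p$, and (iv) the mass-of-rays estimate (Lemma~\ref{lem2009-08-29-2}) to derive a contradiction with the volume-growth hypothesis. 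This is far more elementary than the Cheeger--Colding route, but the price is that the threshold constant $\delta(K^{*})$ depends on the radial sectional curvature of the specific manifold through $K^{*}=\min\{0,G,K\}$, not merely on the dimension $n$. In particular, the paper's method does \emph{not} recover Theorem~\ref{thm1.2-2009-11-05} with a universal $\delta(n)$; for that one still needs the analytic apparatus you describe.
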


\bigskip

Our purpose of this article is to extend Theorem \ref{thm1.2-2009-11-05} to any complete 
non-compact connected Riemannian manifold $M$, i.e., we will remove the 
non-negative Ricci curvature condition in Theorem \ref{thm1.2-2009-11-05}. 
To state that precisely, we will begin by defining the radial curvature geometry. 

\bigskip

Let $\wt{M}^{n}$ denote a complete non-compact Riemannian $n$-dimensional 
manifold, which is homeomorphic to $\R^{n}$, with a base point $\tilde{p} \in \wt{M}^{n}$. 
Then, we call the pair $(\wt{M}^{n}, \tilde{p})$ an $n$-dimensional {\em model} 
if its Riemannian metric $d\tilde{s}^2$ is expressed 
in terms of geodesic polar coordinates around $\tilde{p}$ as 
\begin{equation}\label{polar}
d\tilde{s}^2 = dt^2 + f(t)^2d \theta^2, \quad 
(t,\theta) \in (0,\infty) \times \Sph_{\tilde{p}}^{n -1}.
\end{equation}
Here $f : (0, \infty) \lra \R$ is a positive smooth function 
which is extendible to a smooth odd function around $0$, 
and $d \theta$ denotes the Riemannian metric on the unit sphere 
$\Sph^{n - 1}_{\tilde{p}} := \{ v \in T_{\tilde{p}} \wt{M}^{n} \ | \ \| v \| = 1 \}$. 
The function $G \circ \tilde{\gamma} : [0,\infty) \lra \R$ is called the 
{\em radial curvature function} of $(\wt{M}^{n}, \tilde{p})$, 
where we denote by $G$ the sectional curvature of $\wt{M}^{n}$, 
and by $\tilde{\gamma}$ any meridian emanating from 
$\tilde{p} = \tilde{\gamma} (0)$. 
Note that $f$ satisfies the differential equation 
\[
f''(t) + G (\tilde{\gamma}(t)) f(t) = 0
\]
with initial conditions $f(0) = 0$ and $f'(0) = 1$. 
The $n$-models are completely classified in \cite{KK}. 
In particular, if $n = 2$, a model are called a {\em non-compact model surface of revolution}.\par  
Let $(M,p)$ be a complete non-compact Riemannian 
$n$-dimensional manifold with a base point $p \in M$.
We say that $(M, p)$ has {\em radial Ricci curvature at $p$ bounded 
from below by the radial curvature function of 
an $n$-model $(\wt{M}^{n}, \tilde{p})$} 
if,
along every unit speed minimal geodesic $\gamma: [0,a) \lra M$ emanating from 
$\gamma (0) = p$, its Ricci curvature $\Ric_p$ with respect to $\gamma'(t)$ satisfies
\[
\Ric_p (\gamma'(t)) := \frac{1}{n -1}
\sum_{i = 1}^{n - 1} \langle R(\gamma'(t), e_{i})\gamma'(t), e_i \rangle \ge G (\tilde{\gamma}(t))
\]
for all $t \in [0,a)$. Here 
$R$ denotes the Riemannian curvature tensor of $M$, which is a multi-linear map, 
defined by 
$R(X, Y)Z := \nabla_{Y}\nabla_{X}Z -  \nabla_{X}\nabla_{Y}Z + \nabla_{[X, Y]}Z$ 
for smooth vector fields $X, Y, Z$ over $M$ and 
$\{ e_{1}, e_{2}, \cdots, e_{n - 1} \} := \{ e_{1}(t), e_{2}(t), \cdots, e_{n - 1}(t) \}$ 
denotes an orthonormal basis of the hyperplane in  $T_{\gamma(t)}M$ orthogonal to 
$\gamma'(t)$. For example, if the Riemannian metric of $\wt{M}$ is 
$dt^2 + t^{2}d \theta^2$, or $dt^2 + \sinh^{2} t\,d \theta^2$, then 
$G (\tilde{\gamma}(t)) = 0$, or $G (\tilde{\gamma}(t)) = -1$, respectively. 
Notice that {\bf the radial Ricci curvature may change signs wildly}. 
For example, there exist model surfaces of revolution 
with finite total curvature whose Gauss curvatures are not  bounded, i.e., 
such surfaces satisfy $\liminf_{t \to \infty} G (\tilde{\gamma}(t)) = - \infty$, or 
$\limsup_{t \to \infty} G (\tilde{\gamma}(t))= \infty$ (see \cite[Theorems 1.3 and 4.1]{KT4}).

\bigskip

To state our main theorem, we need to introduce an essential ratio and an important function: 
Let $(M,p)$ be a complete non-compact connected Riemannian 
$n$-dimensional manifold $M$ whose radial Ricci curvature at the base point $p$ 
is bounded from below by the radial curvature function of an 
$n$-dimensional model $(\wt{M}^{n}, \tilde{p})$. Under this curvature relationship between 
$M$ and $\wt{M}^n$, the limit 
\[
\lim_{t \to \infty} \frac{\vol B_{t} (p)}{\vol B_{t} (\tilde{p})} 
\]
is called the {\em model volume growth}, where 
$B_{t}(p) \subset M$ denotes the open distance ball at $p$ with radius $t > 0$, 
and $B_{t} (\tilde{p}) \subset \wt{M}^n$ denotes the open distance ball at $\tilde{p}$ 
with radius $t > 0$. Furthermore we define a function
\begin{equation}\label{net-function}
F(r) := \left( \int_{0}^{\pi} \sin^{n -2} t\,dt \right)^{-1} \int_{0}^{r} \sin^{n -2} t\,dt
\end{equation}
on $[0, \pi]$, and we call it {\em the net function for} 
$\Sph^{n - 1}_{p} := \{ v \in T_{p} M \ | \ \| v \| = 1 \}$.\par 
Now our main theorem is stated as follows, 
which has various advantages of the Cheeger--Colding theorem above:

\medskip

\begin{MT} 
Let $M$ be a complete non-compact connected Riemannian $n$-manifold. Then, for any fixed point 
$p \in M$, 
\begin{enumerate}[{\rm ({A--}1)}]
\item 
There exists a locally Lipschitz function $G(t)$ (respectively $K(t)$) on $[0, \infty)$ 
such that radial Ricci (respectively sectional) curvature of $M$ at $p$ 
is bounded from below by that of an $n$-model $(\wt{M}^{n}, \tilde{p})$ 
with $G$ (respectively that of a non-compact model surface of revolution with $K$) 
as its radial curvature function.
\item
Moreover, if 
\begin{enumerate}[{\rm ({B--}1)}]
\item
$
\displaystyle{
\lim_{t \to \infty} \vol B_{t}(\tilde{p}) = \infty
}$ and  
\item
$
\displaystyle{
\lim_{t \to \infty} \frac{\vol B_{t} (p)}{\vol B_{t}(\tilde{p})} 
\ge 1 - F(\delta (K^{*}))
}$,
\end{enumerate}
then $M$ is diffeomorphic to Euclidean $n$-space $\R^{n}$.
Here $B_{t}(p) \subset M$ and $B_{t} (\tilde{p}) \subset \wt{M}^n$ 
denote the open distance balls at $p$ and at $\tilde{p}$ with radius $t > 0$, respectively, 
and we set $K^{*} := \min \{0, G, K\}$
and 
\[
\delta (K^{*}) := \frac{\pi}{2} \exp \left( \int^{\infty}_{0} t \,K^{*} (t) dt \right).
\]
\end{enumerate}
\end{MT}

\bigskip\noindent
Here, we say that $M$ has {\em 
radial sectional curvature at the base point $p \in M$ bounded 
from below by that of a non-compact model surface of revolution} 
if, along every unit speed minimal geodesic $\gamma: [0,a) \lra M$ 
emanating from $p = \gamma (0)$, 
its sectional curvature $K_M(\sigma_{t})$ is bounded from below by the radial curvature function 
of the surface for all $t \in [0, a)$ and all $2$-dimensional linear spaces 
$\sigma_{t}$ spanned by $\gamma'(t)$ and a tangent vector to $M$ at $\gamma(t)$.\par 
The first assertion (A--1) is already proved for the radial sectional curvature of the manifold 
at any fixed point (see \cite[Lemma 5.1]{KT2}).\par 
In the second assertion (A--2), it is not necessary, as a condition, 
whether the value $\int^{\infty}_{0} t \,K^{*} (t) dt$ is finite or not. 
Moreover, the (A--2) has at least two advantages of Theorem \ref{thm1.2-2009-11-05}, 
which are as follows\,: 
\begin{enumerate}[{\rm (1)}]
\item
The condition (B--1) is natural, because we may easily find 
such a $(\wt{M}^{n}, \tilde{p})$. For example, $\wt{M}^{n} = \R^{n}$. 
\item
Our volume growth is bounded from below by a {\bf definite} constant.
\end{enumerate}

\medskip

We may remove the condition (B--1) by assuming 
the radial sectional curvature is bounded below: 

\begin{CMT}
Let $(M,p)$ be a complete non-compact Riemannian $n$-manifold $M$ 
whose radial sectional curvature at the base point $p$ is bounded from below by
the radial curvature function $G$ of a non-compact 
model surface of revolution $(\wt{M}, \tilde{p})$. If 
\[
\lim_{t \to \infty} \frac{\vol B_{t} (p)}{\vol B_{t}^{n} (\tilde{p})} 
\ge 1 - F(\delta (G_{-}))
\]
then $M$ is diffeomorphic to Euclidean $n$-space $\R^{n}$.
Here $B_{t}^{n} (\tilde{p})$ denotes 
the open distance ball at $\tilde{p} \in \wt{M}^n$ with radius $t > 0$ in 
an $n$-dimensional model $(\wt{M}^n, \tilde{p})$, 
and we set $G_{-} := \min \{0, G\}$.
\end{CMT}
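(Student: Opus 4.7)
The plan is to derive the Corollary directly from the Main Theorem after replacing the given radial curvature function $G$ by $G_{-}:=\min\{0,G\}$. Since the radial sectional curvature of $M$ at $p$ is bounded below by $G$ and $G\ge G_{-}$ pointwise, both the radial sectional and the radial Ricci curvatures at $p$ are bounded below by $G_{-}$ as well. Because $G$ is smooth and $\min\{0,\cdot\}$ is $1$-Lipschitz, $G_{-}$ is locally Lipschitz. Conclusion (A--1) of the Main Theorem is therefore realized by taking both the $n$-model $\wt{M}^{n}$ and the sectional-bound surface-of-revolution model to have radial curvature function $G_{-}$. With these choices $K^{*}=\min\{0,G_{-},G_{-}\}=G_{-}$, so $\delta(K^{*})=\delta(G_{-})$, matching the constant in the statement.

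Next I verify (B--1) for this $G_{-}$-model. Its warping function $f_{-}$ satisfies $f_{-}''=-G_{-}f_{-}\ge 0$ on $[0,\infty)$ because $G_{-}\le 0$, and combined with $f_{-}(0)=0$ and $f_{-}'(0)=1$, convexity yields $f_{-}(t)\ge t$ for every $t\ge 0$. Hence
$$
\vol B_{t}(\tilde{p}) \;=\; \vol\Sph^{n-1}(1)\int_{0}^{t} f_{-}(s)^{n-1}\,ds \;\ge\; \frac{\vol\Sph^{n-1}(1)}{n}\,t^{n}\;\longrightarrow\;\infty
$$
as $t\to\infty$, so (B--1) holds automatically. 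Identifying the $n$-dimensional model $(\wt{M}^{n},\tilde{p})$ appearing in the Corollary with this $G_{-}$-model, condition (B--2) is exactly the volume-growth hypothesis of the Corollary. Applying (A--2) of the Main Theorem then gives that $M$ is diffeomorphic to $\R^{n}$.

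There is no genuine analytical obstacle in the reduction: the only conceptual point is to observe that weakening the curvature bound from $G$ to $G_{-}$ preserves every inequality needed in (A--1) while simultaneously forcing the volume of the comparison model to diverge, which is precisely what renders (B--1) an unnecessary hypothesis in the Corollary. Everything else is a direct appeal to the Main Theorem.
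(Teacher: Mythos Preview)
Your reduction has a genuine gap in the step ``Identifying the $n$-dimensional model $(\wt{M}^{n},\tilde{p})$ appearing in the Corollary with this $G_{-}$-model.'' The $n$-model $\wt{M}^{n}$ in the Corollary is the $n$-dimensional model \emph{of} $(\wt{M},\tilde{p})$, i.e., it has radial curvature function $G$ and warping function $f$, not $G_{-}$ and $f_{-}$. These are different spaces with different volumes. By Sturm comparison, $G_{-}\le G$ implies $f_{-}\ge f$, hence $\vol B^{n}_{t}(\tilde{p})\le \vol B^{G_{-}}_{t}(\tilde{p})$, and therefore
\[
\frac{\vol B_{t}(p)}{\vol B^{G_{-}}_{t}(\tilde{p})}\;\le\;\frac{\vol B_{t}(p)}{\vol B^{n}_{t}(\tilde{p})}.
\]
So the Corollary's hypothesis, which bounds the right-hand ratio from below, does \emph{not} yield the lower bound on the left-hand ratio that you would need to invoke (B--2) for the $G_{-}$-model. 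Your clever trick of forcing (B--1) by passing to $G_{-}$ simultaneously enlarges the denominator in (B--2), and the inequality goes the wrong way.

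The paper's proof takes the only route that respects the stated volume ratio: it applies the Main Theorem with both the Ricci and sectional models having curvature $G$ (so $K^{*}=G_{-}$ and the denominator in (B--2) is exactly $\vol B^{n}_{t}(\tilde{p})$), but then (B--1) is no longer automatic. In the case $\lim_{t\to\infty}\vol B^{n}_{t}(\tilde{p})=\infty$ the Main Theorem applies directly; in the remaining case $\int_{0}^{\infty}f^{n-1}<\infty$ one gets $\liminf_{t\to\infty}f(t)=0$, and the paper invokes an external result of Shiohama--Tanaka \cite{ST} to conclude that $M$ is diffeomorphic to $\R^{n}$. Some separate treatment of the finite-volume case appears to be unavoidable.
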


\medskip\noindent
Hence, by our main theorem and this corollary, we realize that 
the difference between Ricci curvature and sectional curvature is whether the volume of each comparison model is finite or not. 
Notice that this corollary directly contains a results of 
do Carmo and Changyu (\cite{CaCh}) as a special case, that is, $f(t)=t$, where 
$f$ is the warping function of the surface $(\wt{M}, \tilde{p})$. 

\medskip

In the following sections, 
all geodesics will be normalized, unless otherwise stated.  

\begin{acknowledgement}
The first named author would like to express to Professors J. Dodziuk, C. Sormani, N. Katz, and 
D. Lee his deepest gratitude for their helpful comments on the first version of our main theorem in 
the differential geometry seminar at the CUNY graduate center, New York City, 
8th and 15th September, 2009. 
\end{acknowledgement}
\section{Mass of Rays and Volume Growth}\label{sec:mass}

The purpose of this section is to investigate the relationship between mass of rays and the model 
volume growth. Especially, Lemma \ref{lem2009-08-29-2} is 
the key lemma to prove our main theorem. Since this lemma was stated in \cite{O} without a proof, 
we will give a proof of it here. 

\bigskip

Throughout this section, let $(M,p)$ denote a complete non-compact Riemannian 
$n$-dimensional manifold $M$ 
whose radial Ricci curvature at the base point $p$ is bounded from below by
the radial curvature function $G (\tilde{\gamma}(t))$ of an $n$-dimensional 
model $(\wt{M}^{n}, \tilde{p})$ with its metric (\ref{polar}).
Let $A_{p}$ be the set of all unit vectors tangent to rays emanating from $p \in M$. 
Then, it is clear that  
$A_{p} = \left\{ v \in \Sph^{n - 1}_{p} \ | \ \rho (v) = \infty \right\}$. 
Here we set 
\[
\rho(v) := \sup \{ t > 0 \ | \ d(p, \gamma_{v}(t)) = t \},
\]
where 
$\gamma_{v}$ denotes the unit speed geodesic emanating from $p \in M$ such that 
$v = \gamma'_{v}(0) \in \Sph^{n - 1}_{p}$. 
Let $\{e_{1}, e_{2}, \ldots, e_{n} \}$
be an orthonormal basis of $T_{p}M$ such that  $e_{n} := v \in \Sph^{n - 1}_{p}$. 
Take Jacobi fields $Y_{i}(t, v)$ along the unit speed geodesic $\gamma_{v}$ 
emanating from $p \in M$ such that 
\[
Y_{i} (0, v) = 0, \quad Y'_{i} (0, v) = e_{i}, \quad i = 1,2, \ldots, n-1.
\]
Here $Y'_{i}$ denotes the covariant derivative of $Y_{i}$ along $\gamma_{v}$. 
Then, we set
\[
\Theta (t, v) := \sqrt{\det \left( \left\langle Y_{i}(t, v), Y_{j}(t, v) \right\rangle \right)}, \quad 
1 \le i,\, j \le n - 1.
\]
We define 
\[
\ol{\Theta} (t, v) = 
\left\{
\begin{array}{ll}
\Theta (t, v),& \qquad t \le \rho(v), \\[2mm]
0, & \qquad t > \rho(v).
\end{array}
\right.
\]
Then, 
\[
\vol B_{t} (p) = \int_{0}^{t} dr \int_{\Sph^{n - 1}_{p}} \ol{\Theta} (r, v) \,d\Sph^{n - 1}_{p}.
\]
As well as above, for $(\wt{M}^{n}, \tilde{p})$, 
we may consider the corresponding notions $\Sph^{n - 1}_{\tilde{p}}$, $\tilde{\gamma}_{\tilde{v}}$, $\wt{Y}_{i} (t, \tilde{v})$, $\wt{\Theta} (t, \tilde{v})$, etc. 
Since $\wt{\Theta} (t, \tilde{v}) = f(t)^{n - 1}$, we have 
\begin{equation}\label{model-vol}
\vol B_{t} (\tilde{p}) = \omega_{n - 1} \int_{0}^{t} f(r)^{n - 1}\,dr,
\end{equation}
where we set $\omega_{n - 1} := \vol \Sph^{n - 1}_{\tilde{p}}$.

\begin{lemma}\label{lem4.1}
If 
${\lim_{t \to \infty} \vol B_{t} (\tilde{p}) = \infty}$,
then
\[
\vol_{\Sph_{p}^{n - 1}} A_{p} \ge \omega_{n - 1} 
\lim_{t \to \infty} \frac{\vol B_{t} (p)}{\vol B_{t} (\tilde{p})}.
\]
\end{lemma}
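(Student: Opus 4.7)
The plan is to pass from the volume of a ball to the spherical integral of the Jacobi-determinant $\overline{\Theta}$, use a Bishop--Gromov-type monotonicity coming from the radial Ricci curvature hypothesis, and then evaluate the pointwise limit at each direction separately on $A_p$ and its complement.

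First I would write
\[
\frac{\vol B_{t}(p)}{\vol B_{t}(\tilde{p})}
=\frac{1}{\omega_{n-1}}\int_{\Sph_{p}^{n-1}}
\frac{\int_{0}^{t}\overline{\Theta}(r,v)\,dr}{\int_{0}^{t}f(r)^{n-1}\,dr}\,d\Sph_{p}^{n-1}(v),
\]
using the polar-coordinate formulas for $\vol B_{t}(p)$ and \eqref{model-vol}. This reduces the lemma to showing that the inner ratio tends, as $t\to\infty$, to $0$ for $v\notin A_{p}$ and to some number $\le 1$ for $v\in A_{p}$, and that the interchange of limit and integral is justified.

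Next I would invoke the radial-Ricci Bishop--Gromov comparison: under the hypothesis that the radial Ricci curvature at $p$ is bounded below by $G(\tilde{\gamma}(t))$, the quotient $\Theta(t,v)/f(t)^{n-1}$ is non-increasing in $t$ on $(0,\rho(v)]$, with limiting value $1$ as $t\downarrow 0$. Since $\overline{\Theta}(t,v)=0$ for $t>\rho(v)$, the same monotonicity persists for $\overline{\Theta}(t,v)/f(t)^{n-1}$ on all of $(0,\infty)$. A standard one-variable lemma (if $a(t)/b(t)$ is non-increasing and $a,b\ge 0$, then so is $\int_{0}^{t}a/\int_{0}^{t}b$) then yields that for every fixed $v$ the ratio
\[
\varphi_{t}(v):=\frac{\int_{0}^{t}\overline{\Theta}(r,v)\,dr}{\int_{0}^{t}f(r)^{n-1}\,dr}
\]
is non-increasing in $t$ and bounded by $1$.

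Now I would compute $\lim_{t\to\infty}\varphi_{t}(v)$ directionwise. For $v\in\Sph_{p}^{n-1}\setminus A_{p}$, the numerator stabilizes at the finite value $\int_{0}^{\rho(v)}\Theta(r,v)\,dr$, while the hypothesis $\vol B_{t}(\tilde{p})\to\infty$ forces $\int_{0}^{t}f(r)^{n-1}\,dr\to\infty$, so $\varphi_{t}(v)\to 0$. For $v\in A_{p}$ the limit exists by monotonicity and lies in $[0,1]$. Since $\varphi_{t}$ is monotone in $t$, the monotone convergence theorem (applied to the decreasing family $\varphi_{t}$ with integrable majorant $1$ on the compact domain $\Sph_{p}^{n-1}$) lets me commute $\lim$ and $\int$, giving
\[
\lim_{t\to\infty}\frac{\vol B_{t}(p)}{\vol B_{t}(\tilde{p})}
=\frac{1}{\omega_{n-1}}\int_{A_{p}}\lim_{t\to\infty}\varphi_{t}(v)\,d\Sph_{p}^{n-1}(v)
\le\frac{1}{\omega_{n-1}}\,\vol_{\Sph_{p}^{n-1}}A_{p},
\]
which is the desired inequality after multiplying by $\omega_{n-1}$.

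I expect the main obstacle to be carefully setting up the Bishop--Gromov-style monotonicity in the radial-Ricci setting for the ratio of integrals (rather than the pointwise ratio of $\Theta$), and in particular handling the passage through $\rho(v)$ where $\overline{\Theta}$ drops to zero. Everything else—the splitting over $A_{p}$ and its complement, and the application of monotone (or dominated) convergence—is routine once the monotonicity and the boundedness by $1$ are in hand.
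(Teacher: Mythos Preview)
Your argument is correct, but it is organized differently from the paper's proof. The paper does not use the monotonicity of the integrated ratio $\varphi_{t}(v)$ at all: instead, it fixes an arbitrary open neighbourhood $U(A_{p})$ of $A_{p}$ in $\Sph^{n-1}_{p}$, observes that for some large $t_{0}$ every direction $v$ with $\rho(v)\ge t_{0}$ lies in $U(A_{p})$, and then splits
\[
\vol B_{t}(p)\le \int_{0}^{t_{0}}\!\!\int_{\Sph^{n-1}_{p}}\overline{\Theta}\,d\Sph^{n-1}_{p}\,dr
\;+\;\int_{t_{0}}^{t}\!\!\int_{U(A_{p})}\widetilde{\Theta}\,d\Sph^{n-1}_{p}\,dr,
\]
using only the pointwise Bishop inequality $\overline{\Theta}(r,v)\le f(r)^{n-1}$. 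Dividing by $\vol B_{t}(\tilde{p})$, letting $t\to\infty$, and then shrinking $U(A_{p})$ to $A_{p}$ gives the result. Your route trades this outer-approximation-by-neighbourhoods step for the Bishop--Gromov monotonicity of $\Theta/f^{n-1}$ (and hence of the integrated ratio), which lets you pass to the limit pointwise in $v$ via monotone convergence; as a by-product you also see directly that the limit of the volume ratio exists. The paper's version is slightly more elementary in that it never invokes the ratio monotonicity, while yours is more streamlined and avoids the auxiliary choice of $U(A_{p})$ and $t_{0}$.
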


\begin{proof}
Let $U(A_{p})$ denote any open neighborhood of $A_{p}$ in $\Sph^{n - 1}_{p}$. 
Since $M$ is complete and non-compact, 
there exists $t_{0} > 0$ such that, for any minimal geodesic segment 
$\gamma|_{[0, \,t_{0}]}$ emanating from $p$, 
$\gamma'(0) \in U(A_{p})$. 
It follows from the Bishop volume comparison theorem and (\ref{model-vol}) that, 
for any $t > t_{0}$, 
\begin{align}\label{lem4.1-1}
\vol B_{t} (p) 
&\le \int_{0}^{t_{0}} dr \int_{\Sph^{n - 1}_{p}} \ol{\Theta} (r, v) \,d\Sph^{n - 1}_{p}
+ \int_{t_{0}}^{t} dr \int_{U(A_{p})} \ol{\Theta} (r, v) \,d\Sph^{n - 1}_{p}\notag\\[2mm]
&\le \int_{0}^{t_{0}} dr \int_{\Sph^{n - 1}_{p}} \ol{\Theta} (r, v) \,d\Sph^{n - 1}_{p}
+ \int_{t_{0}}^{t} dr \int_{U (A_{p})} \wt{\Theta} (r, v) \,d\Sph^{n - 1}_{p}\notag\\[2mm]
&= \int_{0}^{t_{0}} dr \int_{\Sph^{n - 1}_{p}} \ol{\Theta} (r, v) \,d\Sph^{n - 1}_{p}
+ \vol_{\Sph_{p}^{n - 1}}U(A_{p}) \int_{t_{0}}^{t} f(r)^{n - 1}\,dr\notag\\[2mm]
&= \int_{0}^{t_{0}} dr \int_{\Sph^{n - 1}_{p}} \ol{\Theta} (r, v) \,d\Sph^{n - 1}_{p}\notag\\[2mm]
& \quad 
+ \frac{\vol_{\Sph_{p}^{n - 1}}U(A_{p})}{\omega_{n -1}}
\left( 
\vol B_{t} (\tilde{p}) - \vol B_{t_{0}}(\tilde{p})
\right).
\end{align}
Then, by the equation (\ref{lem4.1-1}), we have 
\[
\frac{\vol B_{t} (p)}{\vol B_{t}(\tilde{p})} 
\le
\frac{\int_{0}^{t_{0}} dr \int_{\Sph^{n - 1}_{p}} \ol{\Theta} (r, v) \,d\Sph^{n - 1}_{p}}
{\vol B_{t} (\tilde{p})} 
+ \frac{\vol_{\Sph_{p}^{n - 1}}U(A_{p})}{\omega_{n -1}}
\left( 
1 - \frac{\vol B_{t_{0}}(\tilde{p})}{\vol B_{t} (\tilde{p})}
\right).
\]
Thus, we see 
\[
\lim_{t \to \infty} \frac{\vol B_{t} (p)}{\vol B_{t}(\tilde{p})} 
\le 
\frac{\vol_{\Sph_{p}^{n - 1}}U (A_{p})}{\omega_{n -1}},
\]
i.e., 
\[
\vol_{\Sph_{p}^{n - 1}}U (A_{p}) \ge \omega_{n - 1} 
\lim_{t \to \infty} \frac{\vol B_{t} (p)}{\vol B_{t}(\tilde{p})}.
\]
Since $U (A_{p})$ is arbitrary, 
we hence get 
\[
\vol_{\Sph_{p}^{n - 1}}A_{p} \ge \omega_{n - 1} 
\lim_{t \to \infty} \frac{\vol B_{t} (p)}{\vol B_{t}(\tilde{p})}.
\] 
$\qedd$
\end{proof}

\bigskip

Let $F$ denote the net function for $\Sph^{n - 1}_{p}$ on $[0, \pi]$ 
(see (\ref{net-function}) for its definition). 

\begin{lemma}\label{lem2009-08-29-1}
Let $\B_{\delta}(v) \subset \Sph^{n - 1}_{p}$ denote 
the open ball centered at $v \in \Sph^{n - 1}_{p}$ with radius $\delta \in [0, \pi]$. 
Then, 
$\vol \B_{\delta}(v) = \omega_{n -1} F(\delta)$
for all $\delta \in [0, \pi]$. 
\end{lemma}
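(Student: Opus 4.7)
The plan is to compute the volume of a geodesic ball on the unit sphere $\Sph^{n-1}_{p}$ directly, using geodesic polar coordinates centered at $v$. Since $\Sph^{n-1}_{p}$ carries the standard round metric of constant curvature $1$, and since the injectivity radius of any point in this sphere equals $\pi$, the exponential map $\exp_{v}\colon B_{\pi}(0) \subset T_{v}\Sph^{n-1}_{p} \lra \Sph^{n-1}_{p} \setminus \{-v\}$ is a diffeomorphism onto its image, so $\B_{\delta}(v)$ for $\delta \in [0,\pi]$ is exactly the image of a Euclidean disk of radius $\delta$ in $T_{v}\Sph^{n-1}_{p}$ (up to the measure-zero point $-v$ when $\delta = \pi$).

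Next, I would write the round metric on $\Sph^{n-1}_{p}$ in geodesic polar coordinates $(\theta, \omega)$ around $v$. This is the standard warped product form
\[
d\theta^{2} + \sin^{2}\theta \, d\sigma^{2}, \quad (\theta,\omega) \in (0,\pi) \times \Sph^{n-2},
\]
where $d\sigma^{2}$ is the round metric on the unit $(n-2)$-sphere (the warping function $\sin\theta$ is the unique solution of the Jacobi equation $f'' + f = 0$ with $f(0)=0$, $f'(0)=1$). Thus the volume element is $\sin^{n-2}\theta \, d\theta \, d\sigma$, and integration gives
\[
\vol \B_{\delta}(v) = \left(\int_{\Sph^{n-2}} d\sigma\right) \int_{0}^{\delta} \sin^{n-2}\theta \, d\theta = \omega_{n-2}\int_{0}^{\delta}\sin^{n-2}\theta \, d\theta,
\]
where $\omega_{n-2} := \vol \Sph^{n-2}$.

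Applying the same formula at $\delta = \pi$ yields $\omega_{n-1} = \vol \B_{\pi}(v) = \omega_{n-2}\int_{0}^{\pi}\sin^{n-2}\theta \, d\theta$, so
\[
\omega_{n-2} = \omega_{n-1}\left(\int_{0}^{\pi}\sin^{n-2}\theta \, d\theta\right)^{-1}.
\]
Substituting this back and comparing with the definition (\ref{net-function}) of the net function $F$ immediately yields $\vol \B_{\delta}(v) = \omega_{n-1}F(\delta)$.

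There is essentially no substantive obstacle here; the statement is a direct computation on the model space $\Sph^{n-1}$. The only points requiring a word of care are the observation that $\B_{\delta}(v)$ is the image of a single polar chart (which fails only at the antipode $-v$, a set of measure zero and thus irrelevant to the volume integral), and the consistent use of the same dimensional reduction to identify the sphere factor $\omega_{n-2}$ in terms of $\omega_{n-1}$.
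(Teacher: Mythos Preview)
Your proof is correct and follows the same approach as the paper: both compute $\vol \B_{\delta}(v) = \omega_{n-2}\int_{0}^{\delta}\sin^{n-2}t\,dt$ via geodesic polar coordinates on the round sphere, and then use the case $\delta=\pi$ to convert $\omega_{n-2}$ into $\omega_{n-1}$ and recover $F(\delta)$. The paper simply states the key identity in one line, whereas you have spelled out the details of the polar-coordinate computation and the handling of the antipodal point.
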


\begin{proof}
This is clear, since 
$
\vol \B_{\delta}(v) = \omega_{n -2} \int_{0}^{\delta} \sin^{n-2} t \,dt
$
holds for all $\delta \in [0, \pi]$.
$\qedd$
\end{proof}

\begin{lemma}\label{lem2009-08-29-2} 
Let $\delta$ be a constant number in $[0, \pi]$. 
If $\lim_{t \to \infty} \vol B_{t} (\tilde{p}) = \infty$ and 
\begin{equation}\label{lem2009-08-29-2-A}
\lim_{t \to \infty} \frac{\vol B_{t} (p)}{\vol B_{t}(\tilde{p})} 
\ge 
1 - F(\delta),
\end{equation}
then
$
\vol_{\Sph_{p}^{n - 1}} A_{p} \ge \vol \B_{\pi - \delta}(v)
$
holds for all $v \in \Sph^{n - 1}_{p}$. 
\end{lemma}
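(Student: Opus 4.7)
The plan is to combine the two preceding lemmas directly, observing that the net function $F$ satisfies a simple symmetry that makes $1 - F(\delta)$ equal to $F(\pi - \delta)$.

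First, I would apply Lemma \ref{lem4.1} to the hypothesis (\ref{lem2009-08-29-2-A}) to obtain
\[
\vol_{\Sph_{p}^{n - 1}} A_{p} \ge \omega_{n - 1} (1 - F(\delta)).
\]
On the other side, by Lemma \ref{lem2009-08-29-1} the right-hand quantity of the target inequality is
\[
\vol \B_{\pi - \delta}(v) = \omega_{n - 1} F(\pi - \delta),
\]
independently of the choice of $v \in \Sph^{n-1}_{p}$. Thus the lemma reduces to proving the purely elementary identity $F(\delta) + F(\pi - \delta) = 1$.

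To verify this identity, I would perform the substitution $s = \pi - t$ in the integral defining $F(\pi - \delta)$, which uses only the symmetry $\sin(\pi - t) = \sin t$ of the integrand $\sin^{n-2} t$ about $t = \pi/2$. This gives
\[
\int_{0}^{\pi - \delta} \sin^{n-2} t\,dt = \int_{\delta}^{\pi} \sin^{n-2} s\,ds,
\]
and adding $\int_{0}^{\delta} \sin^{n-2} t\,dt$ to both sides recovers $\int_{0}^{\pi} \sin^{n-2} t\,dt$. Dividing through by this normalizing constant yields $F(\delta) + F(\pi - \delta) = 1$, so $1 - F(\delta) = F(\pi - \delta)$, and the desired inequality follows at once from the two bounds above.

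There is no serious obstacle here; the content of the lemma is essentially already packaged in Lemma \ref{lem4.1}, and the only nontrivial input is the symmetry of $\sin^{n-2}$ about $\pi/2$, which is the mild step that identifies the explicit volume growth threshold $1 - F(\delta)$ with the spherical ball complement $\B_{\pi - \delta}(v)$ used in subsequent arguments about rays.
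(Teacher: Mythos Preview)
Your proof is correct and follows essentially the same route as the paper: combine Lemma~\ref{lem4.1} with the hypothesis to get $\vol_{\Sph_{p}^{n-1}} A_{p} \ge \omega_{n-1}(1 - F(\delta))$, and then identify $\omega_{n-1}(1 - F(\delta))$ with $\vol \B_{\pi-\delta}(v)$. The paper phrases the last step geometrically as $\omega_{n-1} - \vol \B_{\delta}(v) = \vol \B_{\pi-\delta}(v)$ (the complement of a $\delta$-ball on the sphere is a $(\pi-\delta)$-ball), which is exactly your identity $F(\delta) + F(\pi-\delta) = 1$ read through Lemma~\ref{lem2009-08-29-1}.
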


\begin{proof}
By Lemma \ref{lem4.1}, Lemma \ref{lem2009-08-29-1}, and (\ref{lem2009-08-29-2-A}), 
$\vol_{\Sph^{n -1}_{p}} A_{p} 
\ge 
\omega_{n - 1} - \vol \B_{\delta} (v)
$ 
holds for all $v \in \Sph^{n - 1}_{p}$. 
Hence, we get the assertion. 
$\qedd$
\end{proof}

\section{Proofs of Diffeomorphism Theorems}\label{sec:proof}

The purpose of this section is to prove 
Main Theorem (Theorem \ref{prop3.2-2009-11-04} and \ref{prop2009-09-29}) 
and its corollary (Corollary \ref{cor3.6-2009-11-05}). 
Throughout this section, let $M$ denote a complete non-compact connected 
Riemannian $n$-dimensional manifold.

\begin{theorem}\label{prop3.2-2009-11-04} 
For any fixed point $p \in M$, 
there exist locally Lipschitz functions $G(t)$ (respectively $K(t)$) on $[0, \infty)$ 
such that radial Ricci (respectively sectional) curvature of $(M, p)$ at $p$ 
is bounded from below by that of an $n$-model with $G$ (respectively that of a 
non-compact model surface of revolution with $K$) as its radial curvature function.
\end{theorem}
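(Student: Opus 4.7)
The approach is to manufacture the radial curvature function $G$ of the model directly from the pointwise infimum of the radial Ricci curvature of $(M,p)$, and then replace that infimum by a locally Lipschitz minorant. For each $t \geq 0$, set
$$
g(t) := \inf \bigl\{ \Ric_{p}(\gamma'(t)) : \gamma \text{ is a unit-speed minimal geodesic from } p \text{ of length} > t \bigr\}.
$$
By the Hopf--Rinow theorem, $\ol{B_{R}(p)}$ is compact for every $R > 0$, so the admissible footpoints $\gamma(t)$ with $t \in [0, R]$ lie in a fixed compact subset of $M$; continuity of $\Ric_{p}$ on compact sets then forces $g$ to be bounded below on every compact subinterval of $[0, \infty)$. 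Note however that $g$ itself need not be continuous, since the set of minimizing geodesics available at parameter $t$ can change abruptly (e.g.\ at the cut locus of $p$).

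Next I would construct a locally Lipschitz function $G : [0, \infty) \to \R$ satisfying $G \leq g$ and $G \leq 0$. Setting $c_{n} := \inf_{[n, n+1]} g - 1 > -\infty$ for each integer $n \geq 0$ (with $c_{-1} := c_{0}$), define $G$ to be the piecewise linear interpolant with nodal values $G(n) := \min\{c_{n-1}, c_{n}, -1\}$. On each interval $[n, n+1]$, $G(t) \leq \max\{G(n), G(n+1)\} \leq c_{n} \leq g(t) - 1$, and $G(t) \leq -1$, as required. The Jacobi-type ODE
$$
f''(t) + G(t)\,f(t) = 0, \qquad f(0) = 0,\ f'(0) = 1,
$$
then admits a unique $C^{2}$ solution on $[0, \infty)$; since $G \leq 0$, the identity $f'' = -Gf \geq 0$ combined with $f(0) = 0$, $f'(0) = 1$ forces $f$ to be convex on the maximal set where $f \geq 0$, yielding $f(t) \geq t > 0$ on $(0, \infty)$. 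Extending $f$ oddly across $0$ and using it as the warping function of the metric $dt^{2} + f(t)^{2}\,d\theta^{2}$ on $(0,\infty)\times\Sph^{n-1}$ produces an $n$-model $(\wt{M}^{n}, \tilde{p})$ whose radial curvature function along any meridian is $G(\tilde{\gamma}(t)) = G(t)$. By construction, $\Ric_{p}(\gamma'(t)) \geq g(t) \geq G(\tilde{\gamma}(t))$ along every minimal geodesic $\gamma$ from $p$, which is exactly the required lower bound.

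The main obstacle is precisely the discontinuity of $g$ noted above, which prevents one from using $g$ directly as the radial curvature function; the coarse piecewise-linear construction sidesteps this by only using infima of $g$ over intervals of unit length rather than pointwise values. A minor regularity point is that the paper's definition of $n$-model nominally requires a smooth warping function: either one slightly extends the notion of model to permit a locally Lipschitz radial curvature function (producing $f \in C^{2,1}$, consistent with the theorem's stated regularity of $G$), or one replaces $G$ by a smooth minorant obtained by downward mollification before solving the ODE. The sectional curvature assertion is not new; I would simply invoke \cite[Lemma 5.1]{KT2}, to which the paper itself already defers.
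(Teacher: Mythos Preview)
Your argument is correct, but the construction differs from the paper's. The paper defines
\[
G(t) := \min\bigl\{\Ric_{p}(\gamma_{v}'(\rho_{t}(v))) : v \in \Sph^{n-1}_{p}\bigr\}, \qquad \rho_{t}(v) := \min\{\rho(v), t\},
\]
where $\rho(v)$ is the cut distance in direction $v$. The device $\rho_{t}(v)$ freezes the value at the cut time for directions whose geodesic has already stopped minimizing, so the minimum is taken over the whole compact sphere $\Sph^{n-1}_{p}$ at every $t$; since $|\rho_{t}(v) - \rho_{t'}(v)| \le |t-t'|$ and curvature is smooth, this $G$ is locally Lipschitz \emph{directly}, with no need for a second minorization step. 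Your route instead takes the raw infimum $g(t)$ over only those directions still minimizing at time $t$, correctly observes that this can jump at cut values, and then repairs it with a piecewise linear minorant. What your approach buys is that you explicitly force $G \le 0$, which makes the positivity of the warping function $f$ (hence the global existence of the model) immediate via convexity; the paper's outline does not spell this out and simply refers to \cite[Lemma~5.1]{KT2}. Conversely, the paper's $G$ is sharper (it can be positive where the radial Ricci curvature is) and avoids the somewhat artificial two-step construction. Both proofs defer the sectional case to the same reference.
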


\begin{proof}
We will state the outline of the proof, since the proof is the very same as that of \cite[Lemma 5.1]{KT2}. 
Let $\gamma_{v} : [0, \rho(v)] \lra M$ denote a minimal geodesic emanating from 
$p = \gamma_{v}(0)$ such that $v = \gamma'_{v}(0) \in \Sph^{n - 1}_{p}$, 
where $\rho(v) := \sup \{ t > 0 \ | \ d(p, \gamma_{v}(t)) = t \}$. 
For each $v \in \Sph^{n - 1}_{p}$, let $\Ric_p (\gamma_{v}'(t))$ be the radial Ricci curvature of $M$ 
at $p$ along $\gamma_{v}$. Now, we define a function $G$ on $[0, \infty)$ by 
$G(t) := \min \left\{\Ric_p (\gamma_{v}'(\rho_{t}(v))) \ | \ v \in \Sph^{n - 1}_{p}\right\}$ 
where $\rho_{t}(v) := \min \{\rho (v), t\}$. 
It is easy to check that $G(t)$ has the required properties.\par 
For a locally Lipschitz function $K(t)$ on $[0, \infty)$ which  
bounds the radial sectional curvature of $M$ at $p$ from below, see \cite[Lemma 5.1]{KT2}. 
$\qedd$
\end{proof}

By Theorem \ref{prop3.2-2009-11-04}, 
we may apply a new type of the Toponogov comparison theorem to the pair $(M, p)$ 
in Theorem \ref{prop3.2-2009-11-04}, which was established by the present authors 
as generalization of the comparison theorem in conventional comparison geometry:

\medskip

\begin{TCT}\label{TCT}\hspace{-1.5mm}{\rm (\cite[Theorem 4.12]{KT2})}\ \par
Let $(X,o)$ be a complete non-compact Riemannian manifold $X$ 
whose radial sectional curvature at the base point $o$ is bounded from below by
that of a non-compact model surface of revolution $(\wt{X}, \tilde{o})$ 
with its metric $dt^2 + h(t)^2d \theta^2$, $(t,\theta) \in (0,\infty) \times \Sph_{\tilde{o}}^1$. 
If $(\wt{X}, \tilde{o})$ admits a sector 
\[
\wt{V}(\delta_{0}) := \{ \tilde{x} \in \wt{X} \, | \, 0 < \theta(\tilde{x}) < \delta_{0} \}, \quad 
\delta_{0} \in (0, \pi],
\]
having no pair of cut points, 
then, for every geodesic triangle $\triangle(oxy)$ in $(X,o)$ 
with $\angle (xoy) < \delta_{0}$, 
there exists a geodesic triangle 
$\wt{\triangle} (oxy) :=\triangle(\tilde{o}\tilde{x}\tilde{y})$ 
in $\wt{V}(\delta_{0})$ such that
\begin{equation}\label{TCT-length}
d(\tilde{o},\tilde{x})=d(o,x), \quad d(\tilde{o},\tilde{y})=d(o,y), \quad d(\tilde{x},\tilde{y})=d(x,y) 
\end{equation}
and that
\[
\angle (xoy) \ge \angle (\tilde{x}\tilde{o}\tilde{y}), \quad  
\angle (oxy) \ge \angle (\tilde{o}\tilde{x}\tilde{y}), \quad
\angle (oyx) \ge \angle (\tilde{o}\tilde{y}\tilde{x}). 
\]
Here $\angle(oxy)$ denotes the angle between the minimal geodesic segments 
from $x$ to $o$ and $y$ forming the triangle $\triangle(oxy)$.
\end{TCT}

\bigskip\noindent
Notice that the assumption on $\wt{V}(\delta_{0})$ in our comparison theorem 
is automatically satisfied, if we employ a von Mangoldt surface of 
revolution (which is, by definition, its radial curvature function is non-increasing on $[0, \infty)$), 
or a Cartan--Hadamard surface of revolution (which is, by definition, its radial curvature function is non-positive on $[0, \infty)$) as a $(\wt{X}, \tilde{o})$ for 
$\delta_{0} \le \pi$.

\begin{remark}
In \cite{KT3}, the present authors very recently generalized, from the radial curvature 
geometry's standpoint, the Toponogov comparison theorem to a complete Riemannian manifold 
with smooth convex boundary.
\end{remark}

\medskip

By the same argument in the proof of \cite[Theorem 5.3]{KT2}, we have 

\begin{lemma}{\rm (see \cite[Theorem 5.3]{KT2})}\label{lem3.1}
Let $(M^{*}, p^{*})$ be 
a non-compact model surface of revolution with its metric 
$dt^2 +  m(t)^2d \theta^2$, $(t,\theta) \in (0,\infty) \times \Sph_{p^{*}}^1$,
satisfying the differential equation
$m''(t) + K (t) m(t) = 0$ with $m(0) = 0$ and $m'(0) = 1$. 
Here $K :[0, \infty) \lra \R$ denotes a continuous function. 
If $M^{*}$ satisfies  
\[
\int^{\infty}_{0} t \,K (t) \,dt > - \infty
\]
and $K(t) \le 0$ on $[0, \infty)$, then 
\[
1 \le \lim_{t \to \infty} m'(t) \le \exp \left( \int^{\infty}_{0} (-t \,K (t)) \,dt \right) < \infty
\]
holds. 
In particular, $M^{*}$ admits a finite total curvature. 
\end{lemma}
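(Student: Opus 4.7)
The proof should rely only on the Jacobi-type ODE $m''(t) + K(t) m(t) = 0$ with initial conditions $m(0)=0$, $m'(0)=1$, plus the two hypotheses $K \le 0$ and $\int_0^\infty t K(t) \, dt > -\infty$. The key observation I would exploit is that, with $K \le 0$ and $m > 0$ on $(0,\infty)$, the equation rewrites as $m''(t) = -K(t) m(t) \ge 0$, so $m'$ is non-decreasing. This already gives the lower bound: since $m'(0)=1$, we get $m'(t) \ge 1$ for all $t \ge 0$, and hence $\lim_{t\to\infty} m'(t) \ge 1$.

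For the upper bound, the plan is to derive a differential inequality for $\log m'$. The monotonicity of $m'$ together with $m(0)=0$ gives the geometric comparison
\[
m(t) = \int_0^t m'(s)\,ds \le t\,m'(t),
\]
and since $m'(t) \ge 1 > 0$ we may divide the ODE by $m'(t)$ to obtain
\[
\frac{m''(t)}{m'(t)} \;=\; \frac{-K(t)\,m(t)}{m'(t)} \;\le\; -t\,K(t).
\]
Integrating this inequality from $0$ to $T$ and using $m'(0)=1$ yields $\log m'(T) \le \int_0^T (-t K(t))\,dt$. Letting $T \to \infty$ and exponentiating gives the required bound
\[
\lim_{T\to\infty} m'(T) \le \exp\!\left(\int_0^\infty (-t K(t))\,dt\right) < \infty,
\]
the finiteness being exactly the hypothesis $\int_0^\infty t K(t)\,dt > -\infty$.

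For the final assertion about finite total curvature, I would simply integrate the ODE itself: writing the Gauss–Bonnet-type expression for the surface of revolution,
\[
\int_{M^*} K\,dA \;=\; 2\pi \int_0^\infty K(t)\,m(t)\,dt \;=\; -2\pi \int_0^\infty m''(t)\,dt \;=\; 2\pi\bigl(1 - \lim_{t\to\infty} m'(t)\bigr),
\]
which is finite by the two bounds already established.

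There is no real technical obstacle here: the whole argument is a one-variable ODE manipulation. The only modest subtlety is recognizing the correct comparison $m(t) \le t\,m'(t)$ that allows one to replace $m(t)/m'(t)$ by $t$ and pair it with $-K(t) \ge 0$ to produce an integrable majorant. Once that step is in place, everything else is bookkeeping with monotonicity and integration by parts in disguise.
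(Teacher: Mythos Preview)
Your argument is correct. The paper does not actually supply a proof of this lemma; it merely states that the result follows ``by the same argument in the proof of \cite[Theorem~5.3]{KT2}'' and records the statement. Your self-contained derivation---convexity of $m$ from $K\le 0$, the comparison $m(t)\le t\,m'(t)$, the logarithmic differential inequality $(\log m')' \le -tK(t)$, and the Gauss--Bonnet computation $c(M^*)=2\pi(1-\lim_{t\to\infty}m'(t))$---is exactly the standard ODE manipulation one expects here and is almost certainly what the cited proof in \cite{KT2} does as well. There is nothing to correct or add.
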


\medskip

Take any $p \in M$, and fix it. From now on, for the $p$, 
let $G, K$ be locally Lipschitz functions on $[0, \infty)$ in 
Theorem \ref{prop3.2-2009-11-04}, respectively. 
Let $(\wt{M}^{n}, \tilde{p})$ denote an $n$-model with the $G$ as its radial curvature function, i.e., 
\[
\Ric_p (\gamma_{v}'(t)) \ge G(\tilde{\gamma}(t))
\] 
on $[0, \infty)$, and let $B_{t}(p)$ (respectively $B_{t} (\tilde{p})$) denote 
the open distance ball at $p$ 
with radius $t > 0$ in $M$ (respectively the open distance ball at $\tilde{p} \in \wt{M}^n$ with radius $t > 0$ in $\wt{M}^n$). Moreover, we denote by $(M^{*}, p^{*})$ 
a non-compact model surface of revolution with its metric 
$
g^{*} = dt^2 +  m(t)^2d \theta^2$, 
$(t,\theta) \in (0,\infty) \times \Sph_{p^{*}}^1$, 
satisfying the differential equation
\[
m''(t) + K^{*} (t) m(t) = 0
\]
with $m(0) = 0$ and $m'(0) = 1$, where $K^{*} := \min \{0, G, K\}$. 
Notice that we may take $(M^{*}, p^{*})$ a comparison surface for the  pair $(M, p)$ whenever 
we apply a new type of the Toponogov  comparison theorem to $(M, p)$, 
since $K(t) \ge K^{*} (t)$ and $K^{*} (t) \le 0$ on $[0, \infty)$. 

\bigskip

\begin{theorem}\label{prop2009-09-29}
If $\lim_{t \to \infty} \vol B_{t} (\tilde{p}) = \infty$ and 
\begin{equation}\label{prop2009-09-29-A}
\lim_{t \to \infty} \frac{\vol B_{t} (p)}{\vol B_{t} (\tilde{p})} 
\ge 1 - F(\delta (K^{*}))
\end{equation}
then $M$ is diffeomorphic to Euclidean $n$-space $\R^{n}$. 
Here $F$ denotes the net function for $\Sph^{n -1}_{p}$, and we set 
\[
\delta(K^{*}) := \frac{\pi}{2} \exp \left( \int^{\infty}_{0} t \,K^{*} (t) \, dt \right).
\]
\end{theorem}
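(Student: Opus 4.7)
The plan is to use the critical point theory of distance functions recalled in Section \ref{sec:intro}: since $M\cong\R^n$ as soon as $d(p,\cdot)$ has no critical points on $M\setminus\{p\}$, I would argue by contradiction, supposing some $q\ne p$ is critical for $p$ and aiming to violate the lower bound on the measure of the ray set $A_p$ produced by Lemma \ref{lem2009-08-29-2}. First, Lemma \ref{lem2009-08-29-2} applied with $\delta=\delta(K^*)$, together with Lemma \ref{lem2009-08-29-1}, converts the volume growth hypothesis (\ref{prop2009-09-29-A}) into $\vol_{\Sph^{n-1}_p}A_p \ge \omega_{n-1}(1-F(\delta(K^*)))$. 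On the geometric side, Theorem \ref{prop3.2-2009-11-04} supplies a locally Lipschitz lower bound $K$ for the radial sectional curvature of $(M,p)$; since $K\ge K^*$ and $K^*\le 0$, the comparison surface $(M^*,p^*)$ is Cartan--Hadamard, so the new Toponogov comparison theorem applies to every geodesic triangle based at $p$, with comparison in $M^*$, and Lemma \ref{lem3.1} furnishes the asymptotic control $1\le m'(\infty)\le \pi/(2\delta(K^*))$.

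Next I would derive an opposing upper bound on $A_p$ from the presence of $q$. Let $v_0\in\Sph^{n-1}_p$ denote the initial direction of some minimal geodesic from $p$ to $q$. For each $v\in A_p$ and each large $t>0$, the critical point property at $q$ lets one select minimal geodesics realizing the triangle $\triangle(pq\gamma_v(t))$ with angle at $q$ at most $\pi/2$. By the Toponogov comparison theorem, the comparison triangle $\triangle(\tilde p\tilde q\tilde\gamma_v(t))\subset M^*$ has identical side lengths, its angle at $\tilde q$ is again $\le\pi/2$, and in polar coordinates around $\tilde p$ with $\tilde q$ on the zero meridian, the polar angle $\theta(t,v):=\angle\tilde q\tilde p\tilde\gamma_v(t)$ satisfies $\theta(t,v)\le\angle(v,v_0)$. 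The crucial step is an asymptotic analysis in the Hadamard surface of revolution $(M^*,p^*)$ as $t\to\infty$: using the Busemann function of the meridian at polar angle $\theta$ together with the bound on $m'(\infty)$, one expects to show that the constraint ``angle at $\tilde q\le\pi/2$'' is asymptotically compatible only with $\theta(t,v)\to\theta_\infty(v)\ge\pi-\delta(K^*)$. Consequently $\angle(v,v_0)\ge\pi-\delta(K^*)$ for every $v\in A_p$, so $A_p\subseteq\B_{\delta(K^*)}(-v_0)$ and $\vol_{\Sph^{n-1}_p}A_p \le \omega_{n-1}F(\delta(K^*))$.

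Comparing the two bounds forces $F(\delta(K^*))\ge 1/2$, hence $\delta(K^*)\ge\pi/2$; together with $\delta(K^*)\le\pi/2$ (which follows from $K^*\le 0$) this is a contradiction whenever $K^*\not\equiv 0$. The boundary case $K^*\equiv 0$, i.e.\ $G,K\ge 0$ along every geodesic from $p$, reduces to comparison with $\R^2$ and should be dispatched by a separate direct argument in which the strict inequality inherent in the Euclidean critical point analysis supplies the contradiction. The main technical obstacle is the asymptotic step in the previous paragraph, namely establishing the sharp threshold $\theta_\infty(v)\ge\pi-\delta(K^*)$ from the angle-at-$\tilde q$ constraint. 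This is where the precise value $\delta(K^*)=(\pi/2)\exp\!\bigl(\int_0^\infty tK^*(t)\,dt\bigr)$ enters the proof, via a careful Busemann-function computation on the meridians of $(M^*,p^*)$ combined with the upper bound on $m'(\infty)$ from Lemma \ref{lem3.1}.
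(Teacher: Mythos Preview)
Your overall architecture---assume a critical point $q$, use Toponogov comparison on $(M^*,p^*)$ together with the volume bound on $A_p$ from Lemma~\ref{lem2009-08-29-2} to reach a contradiction---matches the paper's. But the central angle estimate is stated in the wrong direction, and this breaks the argument.

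You assert that the constraint ``angle at $\tilde q\le\pi/2$'' forces, as $t\to\infty$, the polar angle $\theta_\infty(v)\ge\pi-\delta(K^*)$, hence $A_p\subset\B_{\delta(K^*)}(-v_0)$. This cannot hold: since $K^*\le 0$, triangles in $M^*$ are thinner than Euclidean ones, so the angle at $\tilde p$ is \emph{smaller} than its Euclidean value (which is $\pi/2$ in the limit), not larger. Concretely, the paper obtains the correct inequality via Gauss--Bonnet on the comparison triangle together with the total curvature identity $2\pi-c(M^*)=2\pi\lim_{t\to\infty}m'(t)$ and Lemma~\ref{lem3.1}: after also using Cohn\,-\,Vossen's technique to drive the far-vertex angle to zero, one gets only
\[
\theta_\infty(v)\ \ge\ \frac{\pi^2}{2\pi-c(M^*)}\ \ge\ \frac{\pi}{2}\exp\Bigl(\int_0^\infty tK^*(t)\,dt\Bigr)\ =\ \delta(K^*),
\]
i.e.\ $A_p\subset\ol{\B_{\pi-\delta(K^*)}(-v_0)}$. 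Your proposed Busemann-function computation will not improve this to $\pi-\delta(K^*)$.

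With the correct containment one ball is not enough: $\vol\B_{\pi-\delta(K^*)}(-v_0)=\omega_{n-1}(1-F(\delta(K^*)))$ exactly matches the lower bound from Lemma~\ref{lem2009-08-29-2}, so no contradiction. The paper's missing idea is that a critical point $q$ is joined to $p$ by at least \emph{two} distinct minimal geodesics, and the estimate above holds for each of them. This places $A_p$ inside the intersection of two distinct closed balls $\ol{\B_{\pi-\delta(K^*)}(v_1)}\cap\ol{\B_{\pi-\delta(K^*)}(v_2)}$ with $v_1\ne v_2$, forcing the strict inequality $\vol_{\Sph^{n-1}_p}A_p<\vol\B_{\pi-\delta(K^*)}(v_1)$, which contradicts Lemma~\ref{lem2009-08-29-2}. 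You should also treat the case $\int_0^\infty tK^*(t)\,dt=-\infty$ separately (then $\delta(K^*)=0$, the hypothesis forces the volume ratio to equal~$1$, and $M$ is isometric to $\wt M^n$); your invocation of Lemma~\ref{lem3.1} presupposes this integral is finite.
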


\begin{proof}
We first consider the case where 
\[
\int^{\infty}_{0} t \,K^{*} (t) \, dt = -\infty.
\]
Then, since 
\[
\lim_{t \to \infty} \frac{\vol B_{t} (p)}{\vol B_{t} (\tilde{p})} 
= 1
\]
holds, $M$ is isometric to $\wt{M}^{n}$. Hence, $M$ is diffeomorphic to $\R^{n}$.\par
Next, we consider the case where 
\begin{equation}\label{prop2009-09-29-1}
\int^{\infty}_{0} t \,K^{*} (t) \, dt > -\infty.
\end{equation}
From the critical point theory 
(cf.\,\cite{GS}, Corollary 1.4 in \cite[Chapter 11]{P}), 
it is sufficient to prove that any point distinct from $p$ is not critical of $d(p, \, \cdot \, )$. 
Suppose that there exists a critical point 
$x \in M \setminus \{ p \}$ of $d(p, \, \cdot \, )$. 
Let $\gamma : [0, d(p, x)] \lra M$ be any minimal geodesic segment joining from $p = \gamma (0)$ 
to $x = \gamma (d(p, x))$, and let $\mu : [0, \infty) \lra M$ be any ray emanating from $p = \mu (0)$. 
By the Cohn\,-\,Vossen's technique 
(see \cite{CV2}, or \cite[Lemma 2.2.1]{SST}), 
there exist a divergent sequence $\{ t_{i} \}$ and a sequence of minimal geodesic segments 
$\eta_{i} : [0, \ell_{i}] \lra M$ emanating from $x = \eta_{i} (0)$ to $\mu (t_{i}) = \eta_{i} (\ell_{i})$, 
where $\ell_{i} := d(x, \mu(t_{i}))$, such that  
\begin{equation}\label{thm5.1-5}
\lim_{i \to \infty} \angle (\eta_{i}'(\ell_{i}), \mu'(t_{i})) = 0.
\end{equation}
Since $x$ is a critical point of $d(p, \,\cdot\,)$, 
for each $\eta_{i}$, there exists a minimal geodesic segment 
$\sigma_{i} : [0, d(p, x)] \lra M$ emanating from $x$ to $p$ 
such that 
\begin{equation}\label{2009-03-11-2}
\angle (\sigma_{i}' (0), \eta_{i}'(0)) \le \pi / 2.
\end{equation}
Then, it follows from a new type of the Toponogov comparison theorem that 
there exists a geodesic triangle 
$
\triangle( p^{*} x^{*} \mu(t_{i})^{*} ) \subset M^{*}
$ 
corresponding to the triangle 
$\triangle (p x \mu(t_{i})) \subset M$ 
which consists of the sides $\gamma$, $\eta_{i}$, and $\mu|_{[0, \, t_{i}]}$ 
such that (\ref{TCT-length}) holds (for $o = p$ and $y = \mu (t_{i})$) and that 
\begin{equation}\label{2009-03-12-1}
\angle (x^{*} p^{*} \mu(t_{i})^{*})
\le \angle (\gamma'(0), \mu'(0)),
\end{equation}
\begin{equation}\label{2009-03-12-2}
\angle (p^{*} \mu(t_{i})^{*} x^{*})
\le
\angle (p \mu(t_{i}) x).
\end{equation}
By (\ref{thm5.1-5}) and (\ref{2009-03-12-2}), 
\begin{equation}\label{2009-03-12-3}
\lim_{i \to \infty} \angle (p^{*} \mu(t_{i})^{*} x^{*}) = 0.
\end{equation}
On the other hand, we denote by 
$\triangle (p \sigma_{i} (0) \mu(t_{i})) \subset M$ 
the geodesic triangle consisting of the sides $\sigma_{i}$, $\eta_{i}$, 
and $\mu|_{[0, \, t_{i}]}$. 
By our Toponogov comparison theorem and (\ref{2009-03-11-2}), 
we have 
\begin{equation}\label{thm5.1-14}
\angle (p^{*} x^{*} \mu(t_{i})^{*}) \le \pi / 2.
\end{equation}
Applying the Gauss\,--\,Bonnet Theorem to 
the geodesic triangle $\triangle (p^{*} x^{*} \mu(t_{i})^{*})$, 
we have 
\begin{align}\label{thm5.1-15}
&\angle (x^{*} p^{*} \mu(t_{i})^{*}) 
+ \angle (p^{*} x^{*} \mu(t_{i})^{*}) 
+ \angle (p^{*} \mu(t_{i})^{*} x^{*}) - \pi \notag\\[3mm]
&=
\int_{\triangle (p^{*} x^{*} \mu(t_{i})^{*})} K^{*}\circ t \, dM^{*} \notag\\[3mm] 
&\ge
\frac{\angle (x^{*} p^{*} \mu(t_{i})^{*}) }{2\pi} 
\int_{M^{*}} K^{*}\circ t \, dM^{*} \notag\\[3mm]
&= 
\frac{\angle (x^{*} p^{*} \mu(t_{i})^{*}) }{2\pi} \, c(M^{*}).
\end{align}
Moreover, by (\ref{thm5.1-14}), we have
\begin{align}\label{thm5.1-16}
&\angle (x^{*} p^{*} \mu(t_{i})^{*}) 
+ \angle (p^{*} \mu(t_{i})^{*} x^{*})
- \pi / 2 \notag \\[2mm]
&\ge 
\angle (x^{*} p^{*} \mu(t_{i})^{*}) 
+ \angle (p^{*} x^{*} \mu(t_{i})^{*}) 
+ \angle (p^{*} \mu(t_{i})^{*} x^{*}) - \pi
\end{align}
Combining (\ref{thm5.1-15}) and (\ref{thm5.1-16}), we see 
\begin{equation}\label{thm5.1-17}
\angle (x^{*} p^{*} \mu(t_{i})^{*}) 
\ge 
\frac{\pi (\pi - 2 \, \angle (p^{*} \mu(t_{i})^{*} x^{*}) )}{2 \pi -c(M^{*})}.
\end{equation}
Since $K^{*} (t) \le 0$ on $[0, \infty)$ and (\ref{prop2009-09-29-1}), 
it follows from Lemma \ref{lem3.1} that 
\[
1 
\le 
\lim_{t \to \infty} m'(t) 
\le 
\exp \left( \int^{\infty}_{0} (-t \,K^{*} (t)) \,dt \right) 
< \infty.
\]
Thus, by the isoperimetric inequality (cf.\,\cite[Theorem 5.2.1]{SST}), 
we have 
\begin{equation}\label{thm5.1-18}
2 \pi -c(M^{*}) 
= 2 \pi \lim_{t \to \infty} m'(t) 
\le 2 \pi  \exp \left( \int^{\infty}_{0} (-t \,K^{*} (t)) \,dt \right) < \infty.
\end{equation}
Combining (\ref{2009-03-12-1}), (\ref{thm5.1-17}), and (\ref{thm5.1-18}), we have 
\begin{equation}\label{thm5.1-19}
\angle (\gamma'(0), \mu'(0)) 
\ge 
\left(
\frac{\pi}{2} - \angle (p^{*} \mu(t_{i})^{*} x^{*} )
\right)
\exp \left( \int^{\infty}_{0} t \,K^{*} (t) \,dt \right).
\end{equation}
Since (\ref{2009-03-12-3}) holds, we obtain, by taking the limit of $i$, 
\begin{equation}\label{thm5.1-2009-10-08-1}
\angle (\gamma'(0), \mu'(0)) 
\ge \delta(K^{*}).
\end{equation}
Since $\mu$ is arbitrarily taken, (\ref{thm5.1-2009-10-08-1}) implies that 
\begin{equation}\label{thm5.1-2009-10-08-2}
A_{p} \subset \ol{\B_{\pi - \delta (K^{*})} (-\gamma'(0))}
\end{equation}
for all minimal geodesic segments $\gamma$ joining $p$ to $x$. 
Here $-\gamma'(0)$ denotes the antipodal point of $\gamma'(0)$ in $\Sph^{n -1}_{p}$. 
Since $x$ is a critical point of $d (p, \, \cdot \,)$, 
there exist at least two minimal geodesic segments joining $p$ to $x$. 
Hence, it follows from (\ref{thm5.1-2009-10-08-2}) that 
there exists two distinct vectors $v_{1}, v_{2} \in \Sph^{n -1}_{p}$ such that 
$
A_{p} \subset \ol{\B_{\pi - \delta (K^{*})} (v_{1})} \cap \ol{\B_{\pi - \delta (K^{*})} (v_{2})}.
$
In particular,
$\vol_{\Sph^{n -1}_{p}} A_{p}
< \vol \B_{\pi - \delta (K^{*})} (v_{1}) = \vol \B_{\pi - \delta (K^{*})} (v_{2})
$.
This contradicts Lemma \ref{lem2009-08-29-2}. 
$\qedd$
\end{proof}

\begin{corollary}\label{cor3.6-2009-11-05}
Let $(M,p)$ be a complete non-compact Riemannian $n$-manifold $M$ 
whose radial sectional curvature at the base point $p$ is bounded from below by
the radial curvature function $G$ of a non-compact 
model surface of revolution $(\wt{M}, \tilde{p})$. If 
\[
\lim_{t \to \infty} \frac{\vol B_{t} (p)}{\vol B_{t}^{n} (\tilde{p})} 
\ge 1 - F(\delta (G_{-}))
\]
then $M$ is diffeomorphic to Euclidean $n$-space $\R^{n}$.
Here we denote by $B_{t}^{n} (\tilde{p})$ the open distance ball at 
$\tilde{p} \in \wt{M}^n$ with radius $t > 0$ in an $n$-dimensional model $(\wt{M}^n, \tilde{p})$ 
of $(\wt{M}, \tilde{p})$, and we set $G_{-} := \min \{0, G\}$.
\end{corollary}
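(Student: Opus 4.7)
The plan is to deduce the corollary from Theorem~\ref{prop2009-09-29}. Since the radial sectional curvature of $(M,p)$ is bounded below by $G$, the radial Ricci curvature is bounded below by the same $G$ — the Ricci curvature being the average of sectional curvatures of the planes spanned by $\gamma'(t)$ and an orthonormal frame orthogonal to $\gamma'(t)$. Applying Theorem~\ref{prop3.2-2009-11-04} in this setting, we may take the locally Lipschitz functions in (A--1) to both equal the given $G$: the Main Theorem's $n$-model is then $(\wt{M}^n,\tilde p)$ (the $n$-dim version of $\wt{M}$), and the 2-dim Toponogov surface $(M^*,p^*)$ has curvature $K^* = \min\{0,G,G\} = G_-$, giving $\delta(K^*) = \delta(G_-)$. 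Thus the corollary's volume-growth hypothesis coincides with condition (B--2) of the Main Theorem, and only (B--1) remains to be verified.

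For (B--1), the plan is to split according to whether $\int_0^\infty t\,G_-(t)\,dt$ is $-\infty$ or finite. If it equals $-\infty$, then $\delta(G_-) = 0$ and the corollary's hypothesis becomes $\lim_{t\to\infty} \vol B_t(p)/\vol B_t^n(\tilde p) \ge 1$. Combined with the Bishop--Gromov monotonicity of the ratio $\vol B_t(p)/\vol B_t^n(\tilde p)$ (non-increasing in $t$ with initial value $1$ and upper bound $1$), this forces the ratio to be identically $1$; the classical Bishop--Gromov rigidity then implies $M$ is isometric, hence diffeomorphic, to $\wt{M}^n$, which is itself diffeomorphic to $\R^n$ via the exponential map based at $\tilde p$. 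No use of (B--1) is needed in this degenerate case.

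If instead $\int_0^\infty t\,G_-(t)\,dt$ is finite, Lemma~\ref{lem3.1} applied to $(M^*,p^*)$ yields $\lim_{t\to\infty} m'(t) \in [1,\infty)$, so the warping function $f_- = m$ of the $n$-model $(\wt{M}_-^n,\tilde p_-)$ with radial curvature $G_-$ grows at least linearly, and in particular $\vol B_t(\tilde p_-) \to \infty$. The plan is then to rerun the proof of Theorem~\ref{prop2009-09-29} using $(\wt{M}_-^n,\tilde p_-)$ as the $n$-dim comparison model (legitimate because $\Ric \ge G \ge G_-$), keeping $(M^*,p^*)$ as the 2-dim Toponogov surface. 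The main obstacle will be that the hypothesis is stated in terms of $\vol B_t^n(\tilde p)$ rather than $\vol B_t(\tilde p_-)$, and Sturm's comparison between $f'' + G f = 0$ and $f_-'' + G_- f_- = 0$ gives $f \le f_-$, in which direction the substitution $\vol B_t^n(\tilde p) \leadsto \vol B_t(\tilde p_-)$ weakens the ratio bound; I expect to overcome this by combining the linear asymptotics for $f_-$ with Bishop--Gromov monotonicity to show that $f$ also grows linearly (so that $f/f_- \to $ a positive constant and the two volume ratios have the same limit), at which point the hypothesis supplies exactly the bound needed to invoke Theorem~\ref{prop2009-09-29}.
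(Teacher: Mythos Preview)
Your reduction to Theorem~\ref{prop2009-09-29} with $K^*=G_-$ is correct, and your Case~1 ($\int_0^\infty t\,G_-\,dt=-\infty$) matches what the paper does inside the proof of Theorem~\ref{prop2009-09-29}. The difficulty is entirely in your Case~2, and there your plan has a genuine gap.

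The paper does \emph{not} split on the finiteness of $\int_0^\infty t\,G_-\,dt$; it splits on whether (B--1) itself holds. When $\lim_{t\to\infty}\vol B_t^n(\tilde p)=\infty$, Theorem~\ref{prop2009-09-29} applies verbatim. When $\lim_{t\to\infty}\vol B_t^n(\tilde p)<\infty$, one has $\int_0^\infty f^{n-1}\,dt<\infty$, hence $\liminf_{t\to\infty}f(t)=0$, and the paper invokes \cite[Theorem~1.2]{ST} (which uses the radial \emph{sectional} curvature bound) to conclude that $M$ is diffeomorphic to $\R^n$. This external result is precisely what allows the corollary to drop condition (B--1).

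Your proposed route tries instead to manufacture (B--1) from $\int_0^\infty t\,G_-\,dt>-\infty$, but the argument you sketch does not do this. Bishop--Gromov between $M$ and $\wt M^n$ gives $\vol B_t(p)\le \vol B_t^n(\tilde p)$ and, combined with the hypothesis, $\vol B_t(p)\ge c\,\vol B_t^n(\tilde p)$; so the two volumes are comparable, but neither inequality forces $\vol B_t^n(\tilde p)\to\infty$ unless you already know $\vol M=\infty$, which is not assumed. Sturm comparison gives only $f\le m$ and that $f/m$ is non-increasing, so $f/m$ may well tend to $0$; nothing in your data prevents $\int_0^\infty f^{n-1}<\infty$. (Even if $f/m\to c>0$, the two volume ratios would differ by the factor $c^{n-1}$, not coincide as you state---though in that situation (B--1) would hold for $\wt M^n$ and you could apply Theorem~\ref{prop2009-09-29} directly, so the model-switch is unnecessary.) The missing ingredient is exactly the case the paper handles via \cite{ST}: when the model volume is finite, one must use the sectional-curvature hypothesis through a separate compactification/diameter result rather than through the volume-growth machinery.
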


\begin{proof}
By Theorem \ref{prop2009-09-29}, it is sufficient to prove the corollary in the case where 

\begin{equation}\label{cor3.6-2009-11-05-1}
\lim_{t \to \infty} \vol B_{t}^{n} (\tilde{p}) < \infty.
\end{equation}
Then, by (\ref{cor3.6-2009-11-05-1})
\[
\int_{0}^{\infty} f(t)^{n - 1}\,dt < \infty
\]
holds, where $f$ denotes the warping function of $\wt{M}$. 
Hence, we have 
$\liminf_{t \to \infty} f(t) = 0$.
Therefore, it follows from \cite[Theorem 1.2]{ST} that 
$M$ is diffeomorphic to $\R^{n}$.
$\qedd$
\end{proof}

\bigskip

\begin{center}
Kei KONDO $\cdot$ Minoru TANAKA 

\bigskip
Department of Mathematics\\
Tokai University\\
Hiratsuka City, Kanagawa Pref.\\ 
259\,--\,1292 Japan

\bigskip

{\small
$\bullet$\,our e-mail addresses\,$\bullet$

\bigskip 
\textit{e-mail of Kondo}:

\medskip
{\tt keikondo@keyaki.cc.u-tokai.ac.jp}

\medskip
\textit{e-mail of Tanaka}:

\medskip
{\tt tanaka@tokai-u.jp}
}
\end{center}

\end{document}